%% LyX 1.5.2 created this file.  For more info, see http://www.lyx.org/.
%% Do not edit unless you really know what you are doing.
\documentclass[oneside,reqno]{alea2}
\usepackage{mathpazo}
\usepackage[T1]{fontenc}
\usepackage[latin9]{inputenc}
\usepackage{amssymb}
\IfFileExists{url.sty}{\usepackage{url}}
                      {\newcommand{\url}{\texttt}}

\makeatletter
%%%%%%%%%%%%%%%%%%%%%%%%%%%%%% Textclass specific LaTeX commands.
  \theoremstyle{plain}
  \newtheorem{thm}{Theorem}
 \theoremstyle{definition}
 \newtheorem*{defn*}{Definition}
  \theoremstyle{plain}
  \newtheorem*{conjecture*}{Conjecture}

%%%%%%%%%%%%%%%%%%%%%%%%%%%%%% User specified LaTeX commands.
\DeclareMathOperator{\supp}{supp}
\DeclareMathOperator{\Ent}{Ent}
\theoremstyle{plain}
\newtheorem*{question*}{Question}
\def\solra{\mathpalette\solra@}
\def\solra@#1#2{
  \vbox{
    \halign{ % starts a table
      ##\crcr % says the table has just one column
      \hfil$\scriptscriptstyle\leftrightarrow$\hfil
      \crcr
      \noalign{
        \kern-\ex@\nointerlineskip
        \vskip 0.6mm
      }
      $\m@th\hfil#1#2\hfil$\crcr
    }
  }
}

\makeatother

\begin{document}

\title{On the precision of the spectral profile}

\author{Gady Kozma}

\begin{abstract}
We examine the spectral profile bound of Goel, Montenegro and Tetali
for the $L^{\infty}$ mixing time of continuous-time random walk in
reversible settings. We find that it is precise up to a $\log\log$
factor, and that this $\log\log$ factor cannot be improved.
\end{abstract}

\keywords{$L^{\infty}$ mixing time, $L^{2}$ mixing time, uniform mixing time,
maxing time, spectral profile, Faber-Krahn inequality, rough isometry,
quasi isometry, coarse isometry, bi-Lipschitz map.}

\address{The Weizmann Institute of Science, Rehovot POB 76100, Israel.}

\email{gady.kozma@weizmann.ac.il}

\subjclass[2000]{60J27 (primary), 68W20, 58J65}

\maketitle

\section{Introduction}

Of all the formulas suggested in the literature as bounds for the
mixing time of a finite graph (see e.g.\ \cite{LK99,MP05,FR07}),
possibly the most promising, from a geometric point of view, is the
spectral profile formula. Introduced by Goel, Montenegro and Tetali
\cite{GMT06}, it brings into the realm of finite graphs the idea
of \emph{Faber-Krahn inequalities}. A Faber-Krahn inequality is an
inequality relating the volume of a set $A$ and the first eigenvalue
of the Laplacian with Dirichlet boundary conditions on $A$ --- we
will give all definitions in the discrete settings below, but for
the history of the topic, mainly in continuous settings, one should
consult \cite{G94}, \cite[\S VIII.6]{C01} or \cite{B01}, which
has a somewhat different take on this topic and an excellent historical
survey. This approach is promising because, as Grigory'an discovered
\cite{G94}, on a general complete manifold it gives sharp estimates
on the decay of the heat kernel, \emph{even in cases where the manifold
does not have polynomial volume growth}. The requirement of polynomial
growth was essential in previous approaches to this problem, using
Sobolev \cite{V85} or Nash \cite{CKS87} inequalities.

Let us describe Faber-Krahn inequalities in the discrete settings.
We will work with weighted, undirected, finite graphs. Let $G$ be
such a graph and $\omega:G\times G\to[0,\infty)$ the weight function.
The \emph{heat kernel} is defined by \[
K(x,y):=\frac{\omega(x,y)}{\omega(x)}\quad\omega(x):=\sum_{z}\omega(x,z).\]
The heat kernel is a stochastic matrix (i.e.\ $\sum_{y}K(x,y)=1$)
and hence describes a Markov chain on $G$. The symmetry $\omega(x,y)=\omega(y,x)$
gives that it is self-adjoint with respect to the stationary measure
$\pi$ defined by \[
\pi(x):=\frac{\omega(x)}{\sum_{y}\omega(y)}\]
and therefore the associated Markov chain is \emph{reversible}. It
is important to note that the results of \cite{GMT06} are not restricted
to the reversible case, and apply to any finite Markov chain, but
in this paper we will restrict our attention to the reversible case.
The Laplacian, which is an operator on $L^{2}(G,\omega)$ is defined
simply as $\Delta:=I-K$ and is self-adjoint and positive.

When $A\subset G$ is some subset, we will introduce the restricted
Laplacian with Dirichlet boundary conditions\begin{equation}
(\Delta_{A}f)(x)=\begin{cases}
\Delta f(x) & v\in A\\
0 & \mbox{otherwise.}\end{cases}\label{eq:Dirichlet}\end{equation}
The smallest eigenvalue for $\Delta_{A}$ will be denoted by $\lambda_{0}(A)$.
It is easy to see that $\lambda_{0}(A)$ may also be defined as \begin{equation}
\lambda_{0}(A)=\inf_{\substack{\supp f\subset A\\
f\not\equiv0}
}\frac{\langle\Delta f,f\rangle}{||f||_{2}^{2}}\label{eq:func}\end{equation}
where $\langle f,g\rangle=\sum_{x}f(x)g(x)\pi(x)$ and $||f||_{p}^{p}=\sum_{x}f(x)^{p}\pi(x)$.
It is somewhat more elegant to describe the results of \cite{GMT06}
with the following quantity instead,\begin{equation}
\lambda(A)=\inf_{\substack{\supp f\subset A\\
f\geq0,f\not\equiv\mbox{const}}
}\frac{\langle\Delta f,f\rangle}{||f||_{2}^{2}-||f||_{1}^{2}}\label{eq:funcA}\end{equation}
and we will adhere to this convention. Note that as long as $\pi(A)\leq1-\epsilon$
the quatities $\lambda_{0}$ and $\lambda$ are comparable \cite[eq.\ (1.4)]{GMT06}.
A Faber-Krahn inequality is an inequality of the form $\lambda(A)\leq\Lambda(\pi(A))$
for some function $\Lambda$, so the minimal function $\Lambda$ satisfying
this is defined by \[
\Lambda(r):=\inf_{0<\pi(A)\leq r}\lambda(A).\]
$\Lambda(r)$ is the \emph{spectral profile}. It is defined for all
$r\geq\pi_{*}:=\min_{\emptyset\neq A\subset G}\pi(A)$.

The main result of \cite{GMT06} is a bound for the $L^{\infty}$
mixing time of the continuous-time random walk in terms of the spectral
profile. Let us give the necessary definitions. The continuous-time
random walk on $G$ is defined using $-\Delta$ as the infinitesimal
generator. Explicitly, we define \[
H_{t}=e^{-t\Delta}=e^{-t}\sum_{n=0}^{\infty}\frac{t^{n}}{n!}K^{n}(x,y)\]
and think about $H_{t}(x,y)$ as the probability that a particle doing
continuous-time random walk on $G$, starting from $x$ will be at
$y$ at time $t$. Hence we define the mixing time using\[
\tau_{\infty}(\epsilon):=\inf\left\{ t>0:\sup_{x,y\in G}\left|\frac{H_{t}(x,y)-\pi(y)}{\pi(y)}\right|\leq\epsilon\right\} .\]
We may now state the main result of \cite{GMT06},\begin{equation}
\tau_{\infty}(\epsilon)\leq\int_{4\pi_{*}}^{4/\epsilon}\frac{2\, dr}{r\Lambda(r)}.\label{eq:goel}\end{equation}
In the rest of the discussion we will fix $\epsilon=\frac{1}{2}$
and denote the left hand side by $\tau_{\infty}$ and the right hand
side by $\rho$.

\subsection{}

The starting point of this short note was the hope that in fact (\ref{eq:goel})
is precise in the sense that $\rho<C\tau_{\infty}$ (%
\footnote{Here and below we use $C$ and $c$ to denote absolute positive constants
that may be different from place to place. $C$ will be used for constants
which are {}``large enough'' and $c$ for constants which are {}``small
enough''. The notation $f\approx g$ will stand for $cf\leq g\leq Cf$. %
}). This was motivated by the fact that Faber-Krahn inequalities give
sharp bounds in many interesting manifolds, and by the fact that (\ref{eq:goel})
is in fact sharp under a certain $\delta$-regularity condition (see
\cite[\S 3]{GMT06}). And in fact, the techniques there give quite
easily (and with no regularity assumption), the following:

\begin{thm}
\label{thm:loglog}For any finite graph $G$, \[
\rho<C\tau_{\infty}\log\log1/\pi_{*}(G).\]

\end{thm}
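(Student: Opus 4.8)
The plan is to turn (\ref{eq:goel}) around: produce a pointwise lower bound for $\Lambda(r)$ in terms of $\tau_{\infty}$ and integrate. Write $d_{\infty}(t):=\sup_{x,y}|H_{t}(x,y)/\pi(y)-1|$, so that $d_{\infty}(\tau_{\infty})\le 1/2$ by the definition of $\tau_{\infty}=\tau_{\infty}(1/2)$. The crude lower bound is $\Lambda(r)\ge(\log 2)/\tau_{\infty}$ for every $r$: first, $\lambda(A)\ge\lambda_{1}$, the spectral gap of $\Delta$, since for $f\ge 0$ one has $\|f\|_{1}=\langle f,1\rangle$, so $\|f\|_{2}^{2}-\|f\|_{1}^{2}$ is the variance of $f$ and $\langle\Delta f,f\rangle\ge\lambda_{1}(\|f\|_{2}^{2}-\|f\|_{1}^{2})$ by the variational characterisation of $\lambda_{1}$; secondly, applying $H_{\tau_{\infty}}$ to the second eigenfunction $\psi_{1}$, normalised by $\|\psi_{1}\|_{2}=1$, and using $\langle\psi_{1},1\rangle=0$, we get $e^{-\lambda_{1}\tau_{\infty}}\|\psi_{1}\|_{\infty}=\|H_{\tau_{\infty}}\psi_{1}\|_{\infty}\le d_{\infty}(\tau_{\infty})\,\|\psi_{1}\|_{1}\le 1/2$, while $\|\psi_{1}\|_{\infty}\ge\|\psi_{1}\|_{2}=1$, so $\lambda_{1}\ge(\log 2)/\tau_{\infty}$. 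Since $\rho$ is the integral $\int_{4\pi_{*}}^{8}2\,dr/(r\Lambda(r))$, this alone yields only $\rho\le C\tau_{\infty}\log(1/\pi_{*})$; to remove the spurious $\log$ we must improve the bound for small $r$ to $\Lambda(r)\ge\frac{1}{2\tau_{\infty}}\log\frac{1}{r}$.

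This improvement rests on the estimate: for every $A\subsetneq G$,
\[
\lambda_{0}(A)\ \ge\ \frac{1}{\tau_{\infty}}\,\log\frac{1}{2\pi(A)}.
\]
It suffices to prove it for connected $A$, since $\Delta_{A}$ block-decomposes over the connected components $A_{j}$ of $A$ and $\lambda_{0}(A)=\min_{j}\lambda_{0}(A_{j})$ with each $\pi(A_{j})\le\pi(A)$. For connected $A$, the ground state $\phi_{0}$ of $\Delta_{A}$ --- the minimiser in (\ref{eq:func}) --- is strictly positive on $A$ by Perron--Frobenius, with $\supp\phi_{0}\subset A$ and $e^{-t\Delta_{A}}\phi_{0}=e^{-\lambda_{0}(A)t}\phi_{0}$. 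The operator $e^{-t\Delta_{A}}$ is the semigroup of the walk killed on leaving $A$, hence dominated entrywise by $H_{t}$ (its $n$-step kernels run over walks confined to $A$, those of $H_{t}$ over all walks). Evaluating at a point $x^{\star}$ where $\phi_{0}$ is maximal, and using $\phi_{0}\ge 0$ to write $\sum_{y}H_{t}(x^{\star},y)\phi_{0}(y)=\|\phi_{0}\|_{1}+\sum_{y}(H_{t}(x^{\star},y)/\pi(y)-1)\phi_{0}(y)\pi(y)$, we obtain
\[
e^{-\lambda_{0}(A)t}\,\phi_{0}(x^{\star})=(e^{-t\Delta_{A}}\phi_{0})(x^{\star})\ \le\ (H_{t}\phi_{0})(x^{\star})\ \le\ (1+d_{\infty}(t))\,\|\phi_{0}\|_{1}.
\]
Taking $t=\tau_{\infty}$, so that $1+d_{\infty}(\tau_{\infty})\le 3/2$, and using $\|\phi_{0}\|_{1}\le\pi(A)\|\phi_{0}\|_{\infty}=\pi(A)\phi_{0}(x^{\star})$, we divide by $\phi_{0}(x^{\star})>0$ to get $e^{-\lambda_{0}(A)\tau_{\infty}}\le\frac{3}{2}\pi(A)$, which gives the estimate (with $3/2$ in place of $2$, immaterial for the constants).

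Now we assemble. Since $\lambda(A)\ge\lambda_{0}(A)$ (the denominator in (\ref{eq:funcA}) is at most $\|f\|_{2}^{2}$), any $A$ with $\pi(A)\le r\le 1/4$ has $\lambda(A)\ge\lambda_{0}(A)\ge\frac{1}{\tau_{\infty}}\log\frac{1}{2\pi(A)}\ge\frac{1}{2\tau_{\infty}}\log\frac{1}{r}$ (using $\pi(A)\le r\le 1/4$), so $\Lambda(r)\ge\frac{1}{2\tau_{\infty}}\log\frac{1}{r}$ for $r\le 1/4$. Split $\rho=\int_{4\pi_{*}}^{1/4}\frac{2\,dr}{r\Lambda(r)}+\int_{1/4}^{8}\frac{2\,dr}{r\Lambda(r)}$. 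The second integral is at most $\frac{2\tau_{\infty}}{\log 2}\log 32=O(\tau_{\infty})$ by the crude bound, and for the first, substituting $u=\log(1/r)$,
\[
\int_{4\pi_{*}}^{1/4}\frac{2\,dr}{r\Lambda(r)}\ \le\ 4\tau_{\infty}\int_{4\pi_{*}}^{1/4}\frac{dr}{r\log(1/r)}\ =\ 4\tau_{\infty}\left(\log\log\frac{1}{4\pi_{*}}-\log\log 4\right)\ \le\ 4\tau_{\infty}\log\log\frac{1}{\pi_{*}}.
\]
Adding the two pieces gives $\rho\le C\tau_{\infty}\log\log(1/\pi_{*})$. (For $\pi_{*}$ not small, i.e.\ $G$ of bounded size, the crude bound $\rho=O(\tau_{\infty})$ already suffices; and if $G$ is disconnected then $\tau_{\infty}=\infty$ and there is nothing to prove.)

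The one non-routine step is noticing that the Dirichlet ground state buys the factor $\pi(A)$ in $e^{-\lambda_{0}(A)\tau_{\infty}}\le\frac{3}{2}\pi(A)$: this is the combination of $\|\phi_{0}\|_{1}\le\pi(A)\|\phi_{0}\|_{\infty}$ with the domination $e^{-t\Delta_{A}}\le H_{t}$ and the fact that a single time span $\tau_{\infty}$ already flattens any fixed nonnegative function to within a factor $3/2$ of its mean. Upgrading the constant lower bound $\Lambda(r)\ge c/\tau_{\infty}$ to $\Lambda(r)\gtrsim\frac{1}{\tau_{\infty}}\log\frac{1}{r}$ is exactly what converts $\log(1/\pi_{*})$ into $\log\log(1/\pi_{*})$ after integration; everything else is the elementary substitution above.
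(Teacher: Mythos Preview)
Your proof is correct and follows essentially the same approach as the paper's: both rest on the inequality $\lambda(A)\ge\frac{c}{\tau_{\infty}}\log\frac{1}{\pi(A)}$, which the paper quotes as \cite[Lemma~3.1]{GMT06} while you supply a self-contained proof via the Dirichlet ground state and the domination $e^{-t\Delta_{A}}\le H_{t}$. Your continuous integration $\int\frac{dr}{r\log(1/r)}=\log\log(1/r)$ is exactly the continuous analogue of the paper's dyadic harmonic sum $\sum_{k}\frac{1}{k}\approx\log\log(1/\pi_{*})$, and your separate treatment of $r\in[1/4,8]$ via the spectral-gap bound $\Lambda(r)\ge\lambda_{1}\ge(\log 2)/\tau_{\infty}$ mirrors the paper's handling of the $k=1$ term.
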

Unfortunately, it turns out that this cannot be improved. Indeed we
have 

\begin{thm}
\label{thm:sharp}There exist a sequence $n_{k}\to\infty$ and graphs
$G_{k}$ of size $n_{k}$ and $\pi_{*}=\frac{1}{n_{k}}$ such that
\[
\rho\geq c\tau_{\infty}\log\log n_{k}.\]

\end{thm}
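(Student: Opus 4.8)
The plan is to construct $G_k$ as a suitable "barbell-of-barbells" or, more precisely, a hierarchical/telescoping family of bottlenecks at geometrically spaced scales. The guiding principle is the structure of the bound (\ref{eq:goel}): the integral $\rho = \int_{4\pi_*}^{8} 2\,dr/(r\Lambda(r))$ collects a contribution $\approx 1/\Lambda(r)$ from each dyadic scale of $r$, so if I can arrange that $1/\Lambda(r)$ is of order $T$ (a single target time scale) on \emph{every} dyadic block of $r$ between $\pi_*$ and a constant, then $\rho \approx T \log(1/\pi_*) \approx T\log n_k$, whereas I want the true mixing time $\tau_\infty$ to be only $\approx T\log\log n_k$. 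So the point is to decouple "the spectral profile is small at all scales simultaneously" from "actually escaping all the bottlenecks takes little time", and the mechanism for that decoupling is that many bottlenecks can be crossed in parallel rather than in series.

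Concretely, I would take $G_k$ to be a tree-like arrangement: a root cluster, and then a branching structure with $\approx \log n_k$ levels, where at level $j$ there are many disjoint subgraphs each of which contains a bottleneck (an edge, or a small edge-cut, of conductance tuned so that the associated Dirichlet eigenvalue is $\approx 1/T$) separating a piece of $\pi$-measure $\approx 2^{-j}$ from the rest. The branching degrees are chosen so that level $j$ has total measure spread over enough pieces that the sets witnessing $\Lambda(r)$ for $r \approx 2^{-j}$ are forced to sit inside one bottlenecked branch, giving $\Lambda(2^{-j}) \approx 1/T$. This forces the integrand in (\ref{eq:goel}) to be $\approx T$ on all $\log n_k$ dyadic blocks, hence $\rho \gtrsim T\log n_k$. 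For the upper bound on $\tau_\infty$, I would run the continuous-time walk and observe that at each level the walk must cross \emph{one} bottleneck of relaxation time $\approx T$, but the crossings at different levels, and the equilibration within the exponentially many branches, happen concurrently; a union bound / successive-conditioning argument over the $\approx \log n_k$ levels then gives $L^\infty$ mixing in time $\approx T\log\log n_k$ — the $\log\log$ entering because controlling the $L^\infty$ (relative) error down the branching tree costs an extra factor accounting for the number of branches, which is only logarithmically many levels, each contributing an $O(1)$-to-$O(\log\text{level})$ factor. The cleanest way to get the $\tau_\infty$ bound is probably via the $L^2 \to L^\infty$ and $L^1 \to L^2$ contraction estimates (Nash-type, as in \cite{GMT06, CKS87}) applied level by level, or directly by comparing to a product/collapsed chain whose mixing is easy.

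The main obstacle, I expect, is the \textbf{lower bound on $\rho$}: one must show that for \emph{every} small set $A$ with $\pi(A) \le r \approx 2^{-j}$, the eigenvalue $\lambda(A)$ is genuinely $\le C/T$, i.e. that there is no "cheap" test function supported on a small set that fails to see a bottleneck. This requires the branching structure to be rich enough that any set of measure $\approx 2^{-j}$ is essentially trapped behind a level-$j$ bottleneck (so a Cheeger-type / isoperimetric argument internal to the construction), while \emph{simultaneously} the whole graph must mix fast. Balancing "every small set is bottlenecked" (which tends to make mixing slow) against "fast mixing" is the delicate point; the resolution is precisely that bottlenecks at the \emph{same} level are disjoint and crossed in parallel, so there is no single set of moderate measure whose escape is slow, even though every \emph{small} set's escape is slow. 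A secondary technical nuisance is checking that $\pi_* = 1/n_k$ exactly (so the $\log\log n_k$ in the conclusion matches $\log\log 1/\pi_*$) and that the constants $c$ in the two estimates combine to leave a genuine $\log\log$ gap rather than being swallowed — this is where one must be careful that $T$ can be taken comparable across the two bounds, e.g. by normalizing so that $T \approx 1$ and the whole construction lives on a single time scale.
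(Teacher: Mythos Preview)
Your plan has a genuine gap at the very first step: the target you set is impossible. You aim for $\Lambda(r)\approx 1/T$ on every dyadic block, giving $\rho\approx T\log n_k$, together with $\tau_\infty\approx T\log\log n_k$. That would make $\rho/\tau_\infty\approx \log n_k/\log\log n_k$, which for large $n_k$ exceeds $C\log\log n_k$ and hence contradicts Theorem~\ref{thm:loglog}. So no graph can realise a flat spectral profile and mixing time $T\log\log n_k$; the shape of $\Lambda$ in a sharp example is forced. Tracing back through the proof of Theorem~\ref{thm:loglog}, the inequality $\tau_\infty\ge ck/\lambda(A_k)$ tells you that to make all dyadic scales contribute equally to $\rho$ while keeping $\tau_\infty$ fixed, you need $\Lambda(2^{-m})\approx m/\tau_\infty$, i.e.\ $\Lambda(r)\approx \log(1/r)/\tau_\infty$, not a constant.

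You have also inverted the difficulty. To lower-bound $\rho$ you do not need ``every small set is bottlenecked'': $\Lambda(r)$ is an infimum, so a single witness set at each scale suffices, and this is the easy direction. The work is entirely in bounding $\tau_\infty$ from above, and your ``parallel bottlenecks'' mechanism does not do this: a walker starting at a leaf of your tree must still traverse $\approx\log n_k$ bottlenecks in series, which costs $\approx T\log n_k$, not $T\log\log n_k$. The paper's construction is quite different in spirit. It takes $k\approx\log\log n_k$ \emph{disjoint} (not nested) blocks $H_l$, $l=\lceil\log k\rceil,\dots,k$, glued together by a tiny uniform jump rate. Each $H_l$ is internally a product so that it contains a set $\tilde A_l$ with $\pi(\tilde A_l)\approx 2^{-2^l}$ and $\lambda(\tilde A_l)\le C\,2^{l-k}$; this one witness per block gives $\rho\ge ck\,2^k$. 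For the mixing time, a walker starting in $H_l$ sees only $H_l$ until the global uniform jump fires, and a direct three-level calculation shows $\tau_\infty\le 2^{k+1}$. The decoupling you were looking for is thus achieved not by parallel crossing of nested bottlenecks but by having the spectral profile ``see'' all blocks at once while the walker sees only one.
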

The proof of theorem \ref{thm:sharp} is also not difficult --- the
graphs $G_{k}$ will be (details in section \ref{sec:revers}) composed
of $\log\log n_{k}$ pieces $H_{i}$ where each $H_{i}$ corresponds
to a distinct range of $r$-s in the integral defining $\rho$ (\ref{eq:goel}).
On the other hand, a random walker starting at $H_{i}$ will see only
$H_{i}$ --- when it finally leaves $H_{i}$ it will already be too
mixed to notice any effects from the other $H_{j}$-s. Thus, perhaps
the most natural question to ask is 

\begin{question*}Is it possible to have the graphs $G_{k}$ transitive?\end{question*}

A graph $G$ is transitive if for all $x,y\in G$ there exists an
automorphism of the graph taking $x$ to $y$. It would be extremely
exciting if the answer to the question were to be no. A less exciting,
but nonetheless very natural question is as follows. 

\begin{question*}Is it possible to have the graphs $G_{k}$ unweighted
and of uniformly bounded degrees?\end{question*} Here the rationale
for the question is geometric. The analogy between graphs and manifolds
works best for manifolds with bounded geometry and graphs with bounded
degrees. Hence there is a certain discord in the fact that the examples
constructed in theorem \ref{thm:sharp} are weighted. One would be
tempted to solve the question by constructing the graphs (call them
$G_{k}^{\mathrm{simple}}$) randomly, namely put an edge between $x$
and $y$ in $G_{k}^{\mathrm{simple}}$ with probability $\omega(x,y)$
where $\omega$ is the weight function of $G_{k}$. However, more
care is needed --- applying the recipe above naively would immidiately
create logarithmic tails that would dominate the mixing time. 

Finally, we remark that in non-reversible settings existing bounds
are quite weak. For example, it is possible to have $\rho\geq c|G|^{2}$
while $\tau_{\infty}\leq C|G|\log|G|$. A careful discussion of this
phenomenon can be found in \cite[examples 5.3-5.5]{MT06}.

\subsection{}

Another relevant set of problems revolves around the following: is
the mixing time a geometric property? This is particularly interesting
since many results in mixing have been achieved using representation
theory (\cite{BD92} is probably the most famous) or using coupling
(e.g.\ \cite{LRS01}), techniques which are better described as {}``algebraic''
rather than {}``geometric''. To make the question formal let us
define the notion of a rough isometry.

\begin{defn*}
Let $X$ and $Y$ be metric spaces and let $f:X\to Y$ be a function
and let $K\in(0,\infty)$. We say that $f$ is a $K$-rough isometry
if the following two properties hold:
\begin{enumerate}
\item For any $a,b\in X$,\[
\frac{1}{K}d(a,b)-K\leq d(f(a),f(b))\leq Kd(a,b)+K.\]

\item For any $y\in Y$ there exists an $x\in X$ such that\[
d(f(x),y)\leq K.\]

\end{enumerate}
\end{defn*}
To use this for Markov chains we will restrict ourselves to the simplest
settings, that of random walk on a (unweighted) graph with bounded
degree. In this case the graph has a natural metric structure given
by the path metric, i.e.~the distance $d(v,w)$ is defined to be
the length of the shortest path between $v$ and $w$. And we ask:
is the mixing time invariant to rough isometries? Formally:

\begin{conjecture*}
Let $G$, $H$ be two graphs with $\deg G,\deg H\leq d$. Let $f:G\to H$
be a $K$-rough isometry in the path metrics on $G$ and $H$. Then\begin{equation}
\tau(G)\leq C(K,d)\tau(H).\label{eq:tauGtauH}\end{equation}

\end{conjecture*}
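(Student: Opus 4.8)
\emph{Proof proposal.} I read $\tau$ here as the $L^{\infty}$ mixing time $\tau_{\infty}$ (for reversible continuous-time walks this is comparable to the $L^{2}$ mixing time; for total variation one would additionally need to bound $\tau_{\infty}/\tau_{\mathrm{TV}}$, which I do not attempt). Throughout this sketch all constants hidden in $\approx$, $\lesssim$, $C(\cdot)$, $c(\cdot)$ depend only on $K$ and $d$. The plan is to prove that the bound $\rho$ on the right of (\ref{eq:goel}) is invariant under rough isometry, $\rho(G)\approx\rho(H)$, and then to combine this with $\tau_{\infty}\le\rho$, which is (\ref{eq:goel}), and $\rho\le C\tau_{\infty}\log\log(1/\pi_{*})$, which is theorem \ref{thm:loglog}.

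First the elementary geometric facts. Bounded degree makes $f$ coarsely injective --- $f(a)=f(b)$ forces $d(a,b)\le K^{2}$, so every fibre of $f$ has at most $C(K,d)$ points --- and together with coarse surjectivity this gives $|f(A)|\approx|A|$ for all $A\subseteq G$, hence $|G|\approx|H|$ and $\pi_{*}(G)\approx\pi_{*}(H)$; since on a graph of degree $\le d$ the measure $\pi$ is comparable to normalised counting measure up to a factor $d$, it also gives $\pi_{H}(N_{K}(f(A)))\approx\pi_{G}(A)$, where $N_{K}$ denotes the $K$-neighbourhood. The core step is a transfer of Dirichlet forms: given $A\subseteq G$ and $g\ge0$ supported on $A$ with near-optimal Rayleigh quotient for the Dirichlet eigenvalue $\lambda_{0}(A)$ of (\ref{eq:func}), let $\tilde g(y)$ be the average (or the maximum) of $g$ over the fibre $f^{-1}(B(y,K))$; then $\tilde g\ge0$ is supported on $N_{K}(f(A))$, and one verifies $\|\tilde g\|_{2}^{2}\approx\|g\|_{2}^{2}$ and $\langle\Delta_{H}\tilde g,\tilde g\rangle\lesssim\langle\Delta_{G}g,g\rangle$; the energy estimate holds because for adjacent $y,y'$ in $H$ the fibres $f^{-1}(B(y,K))$ and $f^{-1}(B(y',K))$ both lie in a common subset of $G$ of diameter $O(K)$, so the squared increment of $\tilde g$ across the edge is controlled by finitely many squared increments of $g$, with bounded degree limiting the overcounting. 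This is the graph analogue of the stability of Faber--Krahn (equivalently Nash) inequalities under rough isometry. Converting $\lambda_{0}$ back to $\lambda$ by the remark following (\ref{eq:funcA}) and running the same argument for a coarse inverse of $f$ yields $\Lambda_{G}(r)\ge c(K,d)\,\Lambda_{H}(C(K,d)\,r)$ and the symmetric inequality.

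Substituting $s=C(K,d)\,r$ in the integral defining $\rho$ then leaves a main term $\le C(K,d)\rho(H)$ and two boundary pieces: an overshoot past $r=8$, where $\Lambda_{H}$ is already constant, which is therefore a bounded multiple of the contribution to $\rho(H)$ from $r\in[1,8]$; and a stretch near the lower limit, of logarithmic length $\lesssim 1$, over which every set has $O(d)$ vertices, whence $\Lambda_{H}\ge c(d)$. Since $\rho(H)\ge c>0$ in general, both pieces are absorbed, and $\rho(G)\approx\rho(H)$. Chaining the three estimates, $\tau_{\infty}(G)\le\rho(G)\le C(K,d)\rho(H)\le C(K,d)\,\tau_{\infty}(H)\log\log(1/\pi_{*}(H))$, and since $\pi_{*}(H)\approx\pi_{*}(G)$ the two $\log\log$ factors agree up to an additive $C(K,d)$. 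So this route gives (\ref{eq:tauGtauH}) only with the constant replaced by $C(K,d)\log\log(1/\pi_{*}(H))$.

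Removing that $\log\log$ is, I expect, the genuinely hard part, and it is forced by the detour through $\rho$: by theorem \ref{thm:sharp} the quantity $\rho$ pins $\tau_{\infty}$ down only to within a $\log\log$, and its extremal graphs are built precisely by stacking many mutually invisible scales --- exactly the configuration in which $\rho$ overshoots. A clean proof of the conjecture would instead have to compare the walks on $G$ and $H$, or their heat kernels, directly rather than through $\rho$; with volume doubling and a Poincar\'e inequality such comparisons are classical (Coulhon--Grigory'an, Delmotte), but the whole reason for the Faber--Krahn machinery is that one is working in the regime where doubling fails. A natural first target is therefore the \textbf{transitive} case raised above, where the scale structure is homogeneous and one may hope the $\log\log$ defect of $\rho$ does not arise at all.
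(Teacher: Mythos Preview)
The statement you were asked to prove is a \emph{conjecture}: the paper does not prove it, and explicitly presents it as open. So there is no ``paper's own proof'' to compare against. You recognise this yourself --- your proposal does not claim to establish (\ref{eq:tauGtauH}) but only the weaker $\log\log$ version, and you are candid that removing the $\log\log$ is the genuine obstacle.

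For the weaker statement your route is exactly the one the paper indicates: show that the spectral profile (and hence $\rho$) is a rough-isometry invariant, then sandwich $\tau_{\infty}$ between $\rho$ and $\rho/\log\log(1/\pi_{*})$ using (\ref{eq:goel}) and theorem~\ref{thm:loglog}. The paper asserts the invariance of $\Lambda$ in one line (``it becomes easy to check that the spectral gap and the spectral profile are both rough isometry invariants''); you have supplied the standard push-forward-and-average argument that makes this honest, which is fine. The paper also notes a second derivation of (\ref{eq:riloglog}) via the log-Sobolev constant $\alpha$, which is likewise rough-isometry invariant and satisfies $c/\alpha\le\tau_{\infty}\le C(\log\log 1/\pi_{*})/\alpha$; you do not mention this, but it is an equivalent route.

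Your closing analysis --- that the $\log\log$ loss is intrinsic to any argument factoring through $\rho$, by theorem~\ref{thm:sharp}, and that a proof of the conjecture would have to compare heat kernels directly --- is accurate and matches the spirit of the paper's discussion. In short: you have correctly identified that the conjecture is open, reproduced the paper's argument for the known $\log\log$ approximation, and given a sensible account of where the difficulty lies.
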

Since a rough isometry is reversible, this would in fact imply that
$\tau(G)\approx\tau(H)$.

It is an interesting observation that all approximations for the mixing
time I am aware of are rough isometry invariants. It is easy to see
that isoperimetric inequalities are rough-isometry invariants, and
hence both the Lov\'asz-Kannan integral \cite{LK99} and the Fountoulakis-Reed
integral \cite{FR07} (which bounds the $L^{1}$ mixing time rather
than our $\tau_{\infty}$, but the conjecture is just as relevant
for $\tau_{1}$) are rough-isometry invariants. To see that, for example,
the spectral gap is a rough isometry invariants one has to define
it using functional inequalities i.e.\ (\ref{eq:funcA}) --- note
that the spectral gap is exactly $\lambda(G)$ --- and then it becomes
easy to check that the spectral gap and the spectral profile are both
rough isometry invariants. Thus a \emph{precise} bound of this style
for the spectral gap would probably imply the conjecture.

In particular, combining \cite[theorem 1.1]{GMT06} with theorem \ref{thm:loglog}
gives a weaker form of (\ref{eq:tauGtauH}): \begin{equation}
\tau_{\infty}(G)\leq C(K,d)(\log\log|G|)\tau_{\infty}(H).\label{eq:riloglog}\end{equation}
This result, however, is not new. Indeed, $\tau_{\infty}$ is comparable
to the best constant in the logarithmic Sobolev inequality $\alpha$
defined by \[
\alpha=\inf_{\Ent_{\pi}f^{2}\neq0}\frac{\langle\Delta f,f\rangle}{\Ent_{\pi}f^{2}}\]
in the sense that \[
\frac{c}{\alpha}\leq\tau_{\infty}\leq\frac{C\log\log1/\pi_{*}}{\alpha}.\]
See \cite{MT06} for historical background, the definition of the
entropy $\Ent_{\pi}$ and for the equivalence (theorem 4.13 ibid).
It is easy to see that $\alpha$ is a rough isometry invariance hence
this gives another derivation of (\ref{eq:riloglog}).

We end this discussion with an observation of Itai Benjamini, that
the mixing time \emph{from a given point} is not a rough isometry
invariant. Thus, for example, the mixing time from the root of a binary
tree of height $h$ is $\approx h$. However, the mixing time from
a neighbor of the root is $\approx2^{h}$ (see \cite[chapter 5]{AF}
for both). Since there is a rough isometry of a tree on itself carrying
the root to a neighbor, this demonstrates the claim.

I wish to thank L\'aszl\'o Lov\'asz and Prasad Tetali for many useful
discussions. This material is partially based upon work supported
by the National Science Foundation under agreement DMS-0111298. Any
opinions, findings and conclusions or recommendations expressed in
this material are mine and do not necessarily reflect the views of
the National Science Foundations

\section{\label{sec:revers}Proofs}

\begin{proof}
[Proof of theorem \ref{thm:loglog}]Denote by $A_{k}$ a Rayleigh
set of measure $2^{-k}$ i.e.~$\pi(A_{k})\leq2^{-k}$ and\[
\lambda(A_{k})=\min\{\lambda(S):\pi(S)\leq2^{-k}\}.\]
Where $\lambda$ is from (\ref{eq:funcA}). It is easy to see that
\[
\rho\approx\sum_{k=1}^{\left\lfloor \log_{2}1/\pi_{*}\right\rfloor }\frac{1}{\lambda(A_{k})}.\]
On the other hand, by \cite[lemma 3.1]{GMT06}, for any $k\geq2$\begin{equation}
\tau_{\infty}\geq c\frac{-\log(\pi(A_{k}))}{\lambda(A_{k})}\geq\frac{ck}{\lambda(A_{k})}.\label{eq:tauklam}\end{equation}
As for $k=1$, we have $1/\lambda(A_{1})\leq1/\lambda(G)$ but $\lambda(G)$
is just the spectral gap and hence $1/\lambda(G)\leq C\tau_{\infty}$
\cite[theorem 4.9]{MT06}. Hence (\ref{eq:tauklam}) holds for $k=1$
as well. Therefore\[
\rho\leq\sum_{k=1}^{\left\lfloor \log_{2}1/\pi_{*}\right\rfloor }\frac{C\tau}{k}\approx\tau\log\log1/\pi_{*}.\qedhere\]

\end{proof}

\begin{proof}
[Proof of theorem \ref{thm:sharp}]We may assume w.l.o.g.~that $k$
is sufficiently large. We define simply \[
n_{k}=(k-\left\lceil \log k\right\rceil +1)2^{2^{k}}\]
where $\log$, here and below, is to base $2$. The graph will consist
of $k-\left\lceil \log k\right\rceil +1$ pieces, which we denote
by $H_{\left\lceil \log k\right\rceil },\dotsc,H_{k}$, each with
$2^{2^{k}}$ vertices. We can already define the weight function between
the $H_{l}$s: for every $v_{1},v_{2}$ in different $H_{l}$s we
set \[
\omega(v_{1},v_{2})=\frac{k2^{-k}}{|G|}.\]
Let $A_{l}$ be a set of vertices of size $2^{2^{k}-2^{l}}$ and $B_{l}$
a set of size $2^{2^{l}}$. Setwise we define $H_{l}=A_{l}\times B_{l}$
and then define the weight function $\omega$ as follows:\[
\omega((a_{1},b_{1}),(a_{2},b_{2}))=\begin{cases}
\frac{1}{|H_{l}|}2^{l-k}+\frac{1}{|G|}k2^{-k} & b_{1}\neq b_{2}\\
\frac{1}{|A_{l}|}+\frac{1}{|H_{l}|}2^{l-k}+\frac{1}{|G|}k2^{-k} & b_{1}=b_{2}.\end{cases}\]
With this definition of $\omega$ we would have that $\omega(v)=1+2^{l-k}+k2^{-k}$
for every $v\in H_{l}$. The inhomogeneity of $\omega$ is somewhat
bothersome so we modify $\omega(v,v)$ to fix this, writing\[
\omega(v,v)=(1-2^{l-k})+\frac{1}{|A_{l}|}+\frac{1}{|H_{l}|}2^{l-k}+\frac{1}{|G|}k2^{-k}\quad\forall v\in H_{l}\]
with the result being that $\omega(v)=2+k2^{-k}$ for all $v$. 

With our graph $G$ defined we can start investigating its properties.
We first estimate the spectral profile $\rho$. By the discrete inverse
Cheeger inequality \cite[lemma 2.1]{AM85}, for any set $S$, \[
\lambda(S)\leq C\frac{\pi(\partial S)}{\pi(S)}=C\frac{\omega(\partial S)}{\omega(S)}.\]
where we consider the weight function $\omega$ as a measure which
is a constant multiple of $\pi$. We use it for the set $A_{l}\times\{\mbox{pt}\}$
which we confusingly call $\tilde{A_{l}}$. Now, $\omega(\tilde{A_{l}})\approx|A_{l}|=2^{2^{k}-2^{l}}$.
There are two types of edges coming out of $\tilde{A_{l}}$, edges
to $H_{l}$ and edges to the other $H_{i}$s. The first type has weight
\[
\frac{1}{|H_{l}|}2^{l-k}+\frac{1}{|G|}k2^{-k}=\frac{1}{|H_{l}|}2^{l-k}(1+o(1))\]
where the $o$ notation above and also below means {}``as $k\to\infty$,
uniformly in $l\in[\log k,k]$''. so, after summing over all couples
$(v_{1},v_{2})$, $v_{1}\in\tilde{A_{l}}$ and $v_{2}\in H_{l}\setminus\tilde{A_{l}}$
gives a total contribution $\leq|A_{l}|2^{l-k}(1+o(1))$. The second
type has weight $\frac{1}{|G|}k2^{-k}$ so after summing over all
$(v_{1},v_{2})$, $v_{1}\in\tilde{A_{l}}$ and $v_{2}\in G\setminus H_{l}$
gives a total contribution $\leq|A_{l}|k2^{-k}$. Since $l>\log k$
we get\begin{equation}
\lambda(\tilde{A_{l}})\leq C\frac{\omega(\partial\tilde{A_{l}})}{\omega(\tilde{A_{l}})}\leq C2^{l-k}.\label{eq:lamAl}\end{equation}
Now, $\pi(\tilde{A_{l}})=|A_{l}|/|G|=1/(k-\left\lceil \log k\right\rceil +1)2^{2^{l}}$.
For brevity denote $\epsilon=1/(k-\left\lceil \log k\right\rceil +1)$.
We get that, \[
\int_{\epsilon/2^{2^{l}}}^{\epsilon/2^{2^{l-1}}}\frac{dv}{v\Lambda(v)}\geq\frac{1}{\Lambda(\epsilon/2^{2^{l}})}\int_{\epsilon/2^{2^{l}}}^{\epsilon/2^{2^{l-1}}}\frac{dv}{v}\geq\frac{1}{\lambda(\tilde{A_{l}})}\cdot c2^{l}\stackrel{(\ref{eq:lamAl})}{\geq}c2^{k}.\]
Summing we get\[
\rho=\int_{4\pi_{*}}^{8}\frac{dv}{v\Lambda(v)}\geq c2^{k}(k-\left\lceil \log k\right\rceil +1)\geq ck2^{k}.\]
The proof will be finished once we show that $\tau\leq C2^{k}$.

Let us therefore investigate the random walk on $G$. It will be convinient
to represent it as follows. Assume the walker is at a vertex $v\in H_{l}$.
We first throw a coin which has probability $k/2^{k}\omega(v)$ of
success. Call the event that this throw succeeded $\xi_{1}$ and in
this case choose one of the vertices of $G$ randomly with equal probability
and move there. If $\xi_{1}$ did not occur, throw a second coin which
has probability \[
\frac{2^{l-k}}{\omega(v)-k2^{-k}}=2^{l-k-1}\]
to succeed. Call the event that this throw succeeded $\xi_{2}$ and
in this case choose one of the vertices of $H_{l}$ randomly with
equal probability and move there. Finally, throw a coin with probability
\[
\frac{1}{\omega(v)-k2^{-k}-2^{l-k}}=\frac{1}{2-2^{l-k}}\]
to succeed (if $l=k$ it always does). Call the event that this throw
succeeded $\xi_{3}$ and in this case choose one of the vertices of
the copy of $A_{l}$ containing $v$ randomly with equal probability
and move there. If none of $\xi_{1}$, $\xi_{2}$ and $\xi_{3}$ succeeded,
stay at $v$. It is easy to see that this is equivalent to the walk
on the graph (in fact, we defined the weights on the graph with this
representation in mind).

Examine a random walk of length $2^{k+1}$, starting from some $v\in H_{l}$.
Let $w\in G$ and examine $\mathbb{P}(R(2^{k+1})=w)$ (that starting
point will always be $v$ --- we will not remind this fact in the
notation). We first note that after an event of type $\xi_{1}$ the
walk is completly mixed. Define $\tau_{1}$ to be the first time when
$\xi_{1}$ occurred, which is a stopping time. Using the strong Markov
property we get, for every $t\leq2^{k+1}$, \[
\mathbb{P}(\{\tau_{1}=t\}\cap\{R(2^{k+1})=w\})=\mathbb{P}(\tau_{1}=t)\frac{1}{|G|}\]
and summing over $t$ gives \begin{equation}
\mathbb{P}(\{\tau_{1}\leq2^{k+1}\}\cap\{R(2^{k+1})=w\})=\mathbb{P}(\tau_{1}\leq2^{k+1})\frac{1}{|G|}\leq\frac{1}{|G|}.\label{eq:tau1w}\end{equation}
Now, the event $\tau_{1}>2^{k+1}$ can be estimated simply using\begin{equation}
\mathbb{P}(\tau_{1}>2^{k+1})=\left(1-\frac{k2^{-k}}{2+k2^{-k}}\right)^{2^{k+1}}\leq\left(1-\frac{k2^{-k}}{3}\right)^{2^{k+1}}\leq e^{-2k/3}\label{eq:tau1small}\end{equation}
which immediately gives a lower bound $\mathbb{P}(R(2^{k+1})=w)\geq(1-o(1))/|G|$
valid for all $w$. Further, if $w\not\in H_{l}$ then (\ref{eq:tau1w})
gives an upper bound, since one cannot reach from $v$ to $w$ without
a $\xi_{1}$ event. Hence we will henceforth assume $w\in H_{l}$
and $\tau_{1}>2^{k+1}$. The estimate (\ref{eq:tau1small}) is nice,
but far from our goal of $\frac{3}{2|G|}$.

After an event of type $\xi_{2}$ the walk is totally mixed in $H_{l}$.
Therefore if we define $\tau_{2}$ to be the first time when $\xi_{2}$
occurred then a similar calculation to the above shows that \begin{multline*}
\mathbb{P}(\{\tau_{1}>2^{k+1}\}\cap\{\tau_{2}\leq2^{k+1}\}\cap\{R(2^{k+1})=w\})=\\
\mathbb{P}(\tau_{1}>2^{k+1})\mathbb{P}(\tau_{2}\leq2^{k+1}\,|\,\tau_{1}>2^{k+1})\frac{1}{|H_{l}|}\stackrel{(\ref{eq:tau1small})}{\leq}e^{-2k/3}\frac{1}{|H_{l}|}=o\left(\frac{1}{|G|}\right).\end{multline*}
 With (\ref{eq:tau1w}) we have\begin{equation}
\mathbb{P}(\{\min\{\tau_{1},\tau_{2}\}\leq2^{k+1}\}\cap\{R(2^{k+1})=w\})\leq\frac{1}{|G|}(1+o(1)).\label{eq:tau12w}\end{equation}
Again we note the probability that $\tau_{2}>2^{k+1}$:\begin{equation}
\mathbb{P}(\min\{\tau_{1},\tau_{2}\}>2^{k+1})\leq\left(1-2^{l-k-1}\right)^{2^{k+1}}\leq e^{-2^{l}}\label{eq:tau12}\end{equation}
For the last part we assume $w\in\tilde{A_{l}}$ where here $\tilde{A_{l}}$
is the copy of $A_{l}$ containing $v$. We define $\tau_{3}$ as
the first time $\xi_{3}$ occurred and get \begin{multline}
\mathbb{P}(\{\min\{\tau_{1},\tau_{2}\}>2^{k+1}\}\cap\{\tau_{3}\leq2^{k+1}\}\cap\{R(2^{k+1})=w\})=\\
\mathbb{P}(\{\min\{\tau_{1},\tau_{2}\}>2^{k+1}\})\mathbb{P}(\tau_{3}\leq2^{k+1}\,|\,\min\{\tau_{1},\tau_{2}\}>2^{k+1})\frac{1}{|A_{l}|}\stackrel{(\ref{eq:tau12})}{\leq}\\
\leq e^{-2^{l}}\cdot2^{2^{l}-2^{k}}=o\left(\frac{1}{|G_{k}|}\right).\label{eq:tau123w}\end{multline}
Finally, in the case that $\tau_{1},\tau_{2},\tau_{3}>2^{k+1}$ (so
$w$ must be $v$) we definitely have \begin{equation}
\mathbb{P}(\min\{\tau_{1},\tau_{2},\tau_{3}\}>2^{k+1})\leq\left(1-\frac{1}{2-2^{l-k}}\right)^{2^{k+1}}\leq2^{-2^{k+1}}=o\left(\frac{1}{|G_{k}|}\right).\label{eq:tau4w}\end{equation}
Summing up (\ref{eq:tau1w}), (\ref{eq:tau12w}), (\ref{eq:tau123w})
and (\ref{eq:tau4w}) we finally get \[
\mathbb{P}^{v}(R(2^{k+1})=w)\leq\frac{1}{|G|}(1+o(1))\]
 and hence for $k$ sufficiently large, $\tau\leq2^{k+1}$. This ends
the theorem.
\end{proof}


\begin{thebibliography}{CKS87}
\bibitem[AF]{AF}David Aldous and Jim Fill, \emph{Reversible Markov
Chains and Random Walks on Graphs}. Book draft, \url{http://www.stat.berkeley.edu/~aldous/RWG/book.html}

\bibitem[AM85]{AM85}Noga Alon and Vitali D. Milman, \emph{$\lambda_{1}$,
isoperimetric inequalities for graphs, and superconcentrators}. J.
Combin. Theory Ser. B \textbf{38:1} (1985), 73--88

\bibitem[BD92]{BD92}Dave Bayer and Persi Diaconis, \emph{Trailing
the dovetail shuffle to its lair}. Ann. Appl. Probab. \textbf{2:2}
(1992), 294--313. 

\bibitem[B01]{B01}Rafael D. Benguria, \emph{Rayleigh-Faber-Krahn
Inequality}, in: Encyclopaedia of Mathematics, Supplement III, Managing
Editor: M. Hazewinkel, Kluwer Academic Publishers, 325-327, (2001).
Available from \url{http://www.fis.puc.cl/~rbenguri/pub.html}

\bibitem[CKS87]{CKS87}Eric A. Carlen, Shigeo Kusuoka and Daniel W.
Stroock, \emph{Upper bounds for symmetric Markov transition functions}.
Ann. Inst. H. Poincaré Probab. Statist. \textbf{23:2} suppl., (1987),
245--287. 2001.

\bibitem[C01]{C01}Isaac Chavel, \emph{Isoperimetric inequalities.
Differential geometric and analytic perspectives}. Cambridge Tracts
in Mathematics, 145. Cambridge University Press, Cambridge,

\bibitem[FR07]{FR07}Nikolaos Fountoulakis and Bruce A. Reed, \emph{Faster
mixing and small bottlenecks}. Probab. Theory Related Fields \textbf{137:3-4}
(2007), 475--486.

\bibitem[GMT06]{GMT06}Sharad Goel, Ravi Montenegro and Prasad Tetali,
\emph{Mixing time bounds via the spectral profile}. Electron. J. Probab.
\textbf{11:1} (2006), 1--26.

\bibitem[G94]{G94}Alexander Grigor'yan, \emph{Heat kernel upper bounds
on a complete non-compact manifold}. Revista Math. Iberoamericana
\textbf{10:2} (1994) 395-452.

\bibitem[LK99]{LK99}László Lovász and Ravi Kannan, \emph{Faster mixing
via average conductance}. Annual ACM Symposium on Theory of Computing
(Atlanta, GA, 1999), 282--287 (electronic), ACM, New York, 1999.

\bibitem[LRS01]{LRS01}Michael Luby, Dana Randall and Alistair Sinclair,
\emph{Markov chain algorithms for planar lattice structures}. SIAM
J. Comput. \textbf{31:1} (2001), 167--192.

\bibitem[MT06]{MT06}Ravi Montenegro and Prasad Tetali, \emph{Mathematical
Aspects of Mixing Times in Markov Chains}. Foundations \& Trends in
Theoretical Computer Science, Now Publishers, 2006. \url{http://www.math.gatech.edu/~tetali/PUBLIS/survey.pdf}

\bibitem[MP05]{MP05}Ben Morris and Yuval Peres, \emph{Evolving sets,
mixing and heat kernel bounds}. Probab. Theory Related Fields \textbf{133:2}
(2005), 245--266

\bibitem[V85]{V85}Nicholas Th. Varopoulos, \emph{Isoperimetric inequalities
and Markov chains}. J. Funct. Anal. \textbf{63:2} (1985), 215--239.
\end{thebibliography}
\end{document}